\theoremstyle{plain}
\newtheorem{Theorem}{Theorem}[section]
\newtheorem{Lemma}[Theorem]{Lemma}
\theoremstyle{definition}
\newtheorem{Definition}[Theorem]{Definition}
\title[A note on the continuity of minors]
 {A note on the continuity of minors in \\
 grand Lebesgue spaces}
\author[A. Molchanova]{Anastasia Molchanova}
\address{%
Sobolev Institute of Mathematics \\
4 Acad. Koptyug avenue, Novosibirsk 630090, Russia\\
Peoples' Friendship University \\ 
6 Miklukho-Maklaya str., Moscow 117198, Russia}
\email{a.molchanova@math.nsc.ru}
\thanks{This work was supported by a
    Grant of the Russian Foundation of 
    the Russian Science Foundation
    (Agreement  No.~16-41-02004).}
\subjclass{Primary 46E30; Secondary 46E35}
\keywords{Grand Sobolev space, weak continuity}
\begin{document}

\maketitle

\begin{abstract}
We present a simple proof of the continuity,
in the sense distributions, 
of the minors of the differential matrices 
of mappings 
belonging to grand Sobolev spaces. 
Such function spaces were introduced in connection with a problem on minimal integrability of the Jacobian and are useful in certain aspects of geometric function theory and partial differential equations.
\end{abstract}

\section{Introduction}\label{sec:intro}

The academic literature on enlarged function spaces has grown considerably in recent times. 
Authors are typically concerned with the general theory of function spaces and applications in PDEs. 
An attractive feature of such function spaces is that they require a minimum of a priori assumptions, while member functions retain specific attractive properties such as continuity or regularity. 
Particular examples of these spaces are the so-called grand Lebesgue and grand Sobolev spaces. 
These spaces first appear in a paper by T.~Iwaniec and C.~Sbordone \cite{IwaSbo1992} in which they investigate minimal conditions for the integrability of the Jacobian of an orientation-preserving Sobolev mapping. 
Fundamental properties of these spaces have since been established such as duality and reflexivity \cite{Fio2000,FioKar2004}, as well as the boundedness of various integral operators \cite{JaiSinSin2016,Kok2010,KokMes2009}.
For further discussion of grand spaces, the interested reader is referred to \cite{CapFioKar2008,CasRaf2016,DonSboSch2013,FioGupJai2008,FioMerRak2001,FioMerRak2002,FioSbo1998,GreIwaSbo1997,JaiSinSin2017,Sbo1996,Sbo1998}. 

It is well known that if a sequence of mappings 
$f_m$ 
converges weakly in the Sobolev space
$W^{1,n}_{\rm loc}$
to a mapping 
$f_0$,
then 
all
$k\times k$-minors, 
$k=1, \dots n$, 
of matrices
$Df_m$
tend to the corresponding 
$k\times k$-minors
of the matrix
$Df_0$,
in the sense of distributions (\textit{in} $D'$),
see \cite[Ch. 9]{Mor1966} for the particular case
$n=2$, 
\cite[\S4.5]{Resh1982} and \cite[Theorem 8.20]{Dac2008} for 
$n\geq 2$.
The weak continuity of such minors plays a key role in the calculus of variations respecting the lower semicontinuity problem, 
see \cite{BenKru2017,Dac2008}
and references therein for more information. 
The related question of the integrability of the Jacobian (which is a particular case of a minor) under minimal assumptions, is partially motivated by applications such as nonlinear elasticity theory  \cite{Dac1982}.
Significant results were obtained for mappings with nonnegative Jacobians, which are sometimes called `orientation-preserving' mappings. 
Specifically, S.~M\"{u}ller proved that if 
$|Df| \in L^n$
and
$J_f(x)\geq 0$,
then the Jacobian possesses  
the higher integrability
$J_f(x) \in L \log L$
\cite{Mul1990}.
Further generalizations can be found in \cite{Gre1998,KosZho2002} and associated references.  
Following integrability, continuity theorems for Jacobians in corresponding spaces are the next natural step towards more general approximation results.
In this way T.~Iwaniec and A.~Verde obtained, in \cite{IwaVer1999}, the strong continuity of Jacobians in 
$L \log L$,
while L.~D'Onofrio and R.~Schiattarella in 
\cite{DonSch2013} proved a continuity theorem for 
orientation preserving mappings
$f_k$ 
belonging to the grand Sobolev space
$W^{1,n)}$.
Provided that 
we have the additional requirement of 
uniformly vanishing $n$-modulus, i.e.\
\begin{equation*}
    \lim\limits_{\varepsilon \to 0+} \varepsilon \sup\limits_{k \geq 1}
    \int\limits_{\Omega}|Df_k (x)|^{n-n\varepsilon} \,dx =0,
\end{equation*}
the weak continuity of Jacobians is obtained by  L.~Greco, T.~Iwaniec, and U.~Subramanian~\cite{GreIwaSub2003}. 

This paper proves continuity theorems for the minors of 
the differential matrix of mappings belonging to grand Sobolev spaces
(see Section~\ref{sec:preliminaries} for the definitions). 
More precisely,

\begin{Theorem}\label{th:main}
    Let
    $\Omega \subset \mathbb{R}^n$
    and  
    $f_m = (f_m^1, \dots, f_m^k)\colon \Omega \to \mathbb{R}^k$,
    $1 \leq k \leq n$,
    $m \in \mathbb{N}$,
    be a sequence of mappings locally bounded in 
    $W^{1,p),\delta} (\Omega)$
    with 
    $p > k$.
    Assume that 
    $f_m$
    converges in
    $L^{1}_{\rm loc}$ 
    to
    $f_0 = (f_0^1, \dots, f_0^k)$
    as 
    $m \to \infty$,
    then 
    the sequence of forms 
    $\omega_m = df_{m}^{1}\wedge \dots \wedge df_{m}^{k}$
    converges to
    $\omega_0 = df_{0}^{1}\wedge \dots \wedge df_{0}^{k}$
    in
    $D'$
    and is locally bounded in 
    $L^{\frac{p}{k}),\delta} (\Omega)$.
\end{Theorem}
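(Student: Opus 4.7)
My overall strategy is to reduce Theorem~\ref{th:main} to the classical weak continuity of $k\times k$-minors in an ordinary Sobolev space $W^{1,q}_{\loc}$ with $q>k$, exploiting the fact that grand Sobolev boundedness is strictly stronger than $L^{p-\varepsilon}$-boundedness of the gradient for every small $\varepsilon>0$. Fix a compact set $K\Subset\Omega$. By the definition of the grand $W^{1,p),\delta}$-norm, local boundedness of $\{f_m\}$ yields, for each admissible $\varepsilon>0$, an estimate $\sup_m \|Df_m\|_{L^{p-\varepsilon}(K)}\le \varphi(\varepsilon)$, where $\varphi(\varepsilon)$ blows up only as a (small) negative power of $\varepsilon$. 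Since $p>k$, I may fix $\varepsilon_0>0$ with $p-\varepsilon_0>k$, so that $\{f_m\}$ is bounded in $W^{1,p-\varepsilon_0}(K)$. Combined with the assumed $L^1_{\loc}$-convergence to $f_0$, reflexivity of $W^{1,p-\varepsilon_0}(K)$ and uniqueness of distributional limits force $f_m\rightharpoonup f_0$ weakly in $W^{1,p-\varepsilon_0}_{\loc}(\Omega)$.

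Since $p-\varepsilon_0>k$, I now invoke the classical weak continuity of $k\times k$-minors in $W^{1,p-\varepsilon_0}_{\loc}$ recalled in the introduction (\cite[Theorem~8.20]{Dac2008}, \cite{Resh1982}) to obtain $\omega_m\to\omega_0$ in $D'(\Omega)$.

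For the local $L^{p/k),\delta}$-bound on $\{\omega_m\}$, I apply Hölder's inequality to each coefficient of the $k$-form $\omega_m$, which is a sum of products of $k$ entries of $Df_m$, giving
\begin{equation*}
\|\omega_m\|_{L^{(p-\varepsilon)/k}(K)}\le C_{n,k}\,\|Df_m\|_{L^{p-\varepsilon}(K)}^{k}.
\end{equation*}
The change of variable $\varepsilon=k\eta$ converts the left-hand exponent to $p/k-\eta$, and the $\eta$-weight demanded by the grand $L^{p/k),\delta}$-norm exactly cancels, up to a $k$-dependent multiplicative constant, the $\varepsilon$-weight one picks up when bounding the right-hand side by the grand $W^{1,p),\delta}$-norm of $f_m$. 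This yields $\sup_m \|\omega_m\|_{L^{p/k),\delta}(K)}<\infty$.

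The decisive step is the opening reduction: one must extract from the grand boundedness a classical $W^{1,q}$-bound at a slightly subcritical exponent and identify the weak limit through $L^1_{\loc}$-uniqueness. Once that reduction is secured, the distributional convergence of minors is classical, and the grand integrability of $\omega_m$ amounts to a careful piece of Hölder bookkeeping matching the $\varepsilon$- and $\eta$-weights.
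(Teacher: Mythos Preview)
Your proposal is correct, but it proceeds by a genuinely different route from the paper. The paper argues by induction on $k$ inside the grand Sobolev framework, following Reshetnyak: writing $\omega_m=d\psi_m$ with $\psi_m=(-1)^k f_m^{k+1}\tilde\omega_m$, using the induction hypothesis to get $\tilde\omega_m\to\tilde\omega_0$ in $D'$, proving $\psi_m\to\psi_0$ in $D'$ via H\"older and approximation of $f_0^{k+1}$ by smooth functions, establishing grand Lebesgue bounds on $\psi_m$ and $d\psi_m$ by a careful choice of the auxiliary parameter $\varepsilon'=\varepsilon'(\varepsilon)$, and finally invoking Lemma~\ref{lem:conv_dif_GL}. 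You instead pass at once to an ordinary exponent $q=p-\varepsilon_0>k$, identify the weak $W^{1,q}_{\rm loc}$ limit via the $L^1_{\rm loc}$ hypothesis, and quote the classical weak continuity of minors as a black box; your grand $L^{p/k),\delta}$ bound via the substitution $\varepsilon=k\eta$ and the pointwise estimate $|\omega_m|\le C|Df_m|^k$ is clean, and the needed inequality $\delta(\eta)\le\delta(k\eta)$ follows because (as noted after Definition~\ref{def:GLd}) the hypotheses on $\delta$ force it to be nondecreasing. Your route is shorter and more transparent for $p>k$, but the paper's self-contained inductive method is what allows the extension, with only minor changes, to the borderline case $p=k$ in Theorems~\ref{thm:main-2} and~\ref{thm:main-3}; your reduction breaks down there since no choice of $\varepsilon_0$ yields $p-\varepsilon_0\ge k$.
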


The case 
$p=k$ 
requires some additional conditions, since it makes use of the property of the coincidence between the distributional Jacobian and the point-wise Jacobian
(Theorem~\ref{lem:weak_J} below). 
The same technique used in obtaining proof of the main result, with minor changes, allows us to prove the following results.

\begin{Theorem}\label{thm:main-2}
    Let
    $f_m = (f_{m}^{1}, \dots, f_{m}^{k})$,
    $1 \leq k \leq n$,
    $m \in \mathbb{N}$,
    be a sequence of mappings 
    locally bounded in 
    $W^{1,k)}$
    and
    with 
    $Df_m \in L^{k)}_{b}$.
    Assume that 
    $f_m$
    converges in
    $L^{1}_{\rm loc}$ 
    to
    $f_0 = (f_{0}^{1}, \dots, f_{0}^{k})$
    as 
    $m \to \infty$
    and
    forms 
    $\omega_m = df_{m}^{1}\wedge \dots \wedge df_{m}^{k}$
    and
    $\omega_0 = df_{0}^{1}\wedge \dots \wedge df_{0}^{k}$
    are locally integrable.
    It follows that 
    $\omega_m$
    converges to
    $\omega_0$
    in
    $D'$
    and is locally bounded in 
    $L^{1)}$.
\end{Theorem}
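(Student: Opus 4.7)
The plan is to proceed by induction on $k$, in the same spirit as Theorem~\ref{th:main}, with the borderline exponent $p = k$ handled via the extra regularity encoded in $L^{k)}_{b}$. The base case $k = 1$ is immediate: convergence $f_m^1 \to f_0^1$ in $L^1_{\rm loc}$ yields $df_m^1 \to df_0^1$ in $D'$ by the very definition of the distributional gradient, and the local $L^{1)}$-bound is part of the hypothesis.

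For the inductive step, set $\alpha_m := df_m^2 \wedge \cdots \wedge df_m^k$ and exploit the elementary identity
\begin{equation*}
    \omega_m = d\bigl(f_m^1\, \alpha_m\bigr),
\end{equation*}
valid smoothly by the Leibniz rule and $d^2 = 0$, and extending to our setting as soon as $f_m^1 \alpha_m$ is locally integrable. This integrability follows from Hadamard's inequality together with the H\"older inequality in grand Lebesgue spaces: $\alpha_m$ is locally bounded in $L^{k/(k-1))}$ by virtue of $Df_m \in L^{k)}_{b}$, while $f_m^1 \in W^{1,k)}_{\rm loc}$ embeds, via Rellich--Kondrachov applied to $W^{1,k-\varepsilon}$, compactly into a suitable $L^r_{\rm loc}$ matching the dual exponent up to an $\varepsilon$-loss. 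Pairing with a test $(n-k)$-form $\varphi \in C_c^\infty(\Omega)$ then reduces the desired convergence $\omega_m \to \omega_0$ in $D'$ to
\begin{equation*}
    \int_\Omega f_m^1\, \alpha_m \wedge d\varphi \longrightarrow \int_\Omega f_0^1\, \alpha_0 \wedge d\varphi,
\end{equation*}
with $\alpha_m \to \alpha_0$ in $D'$ supplied either by the induction hypothesis or, alternatively, by Theorem~\ref{th:main} applied to the $(k-1)$-component subsystem at exponent $p = k > k-1$, which sits strictly above the relevant threshold.

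The principal difficulty lies precisely in pushing the limit through the product at the critical exponent: weak $L^{k/(k-1))}$-boundedness of $\alpha_m$ together with mere $L^1_{\rm loc}$-convergence of $f_m^1$ is not by itself enough to force convergence of the product. Here the hypothesis $Df_m \in L^{k)}_{b}$ is tailored to close the gap, providing uniform equi-integrability of $|Df_m|^{k-\varepsilon}$ as $\varepsilon \to 0^+$ and thereby ruling out concentration effects in $\alpha_m$. A diagonal argument in $\varepsilon$, combined with the strong convergence of $f_m^1$ in $L^r_{\rm loc}$ obtained from compact Sobolev embeddings and with grand-space duality, yields the product convergence. The local $L^{1)}$ bound on $\omega_m$ then follows a posteriori from Hadamard's inequality applied to $Df_m$.
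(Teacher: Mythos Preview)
Your inductive scheme matches the paper's, but there is a genuine gap at the decisive step. You assert that the identity $\omega_m = d\bigl(f_m^{1}\,\alpha_m\bigr)$ ``extends to our setting as soon as $f_m^{1}\alpha_m$ is locally integrable.'' This is false at the borderline exponent: the distributional exterior derivative of $f_m^{1}\alpha_m$ need not coincide with the pointwise form $df_m^{1}\wedge\alpha_m=\omega_m$ once $|Df_m|$ is merely in $\bigcap_{\varepsilon>0}L^{k-\varepsilon}$. The paper recalls the explicit counterexample $f(x)=x+x/|x|$ on the unit ball, for which the integration-by-parts formula~\eqref{eq:Det=det} fails for every $p<n$, even though both sides are integrable. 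Establishing the weak identity
\[
\int_\Omega \omega_m\wedge\eta \;=\; (-1)^{k-1}\int_\Omega f_m^{1}\,\alpha_m\wedge d\eta
\qquad\text{for all }\eta\in C_0^\infty(\Omega)
\]
is precisely the content of Greco's theorem (Lemma~\ref{lem:weak_J}, applied to the augmented map $(f_m^{1},\dots,f_m^{k},x^{i_{k+1}},\dots,x^{i_n})$), and \emph{this} is where the hypothesis $Df_m\in L^{k)}_b$ is indispensable, in conjunction with the assumed local integrability of~$\omega_m$.

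You then invoke $L^{k)}_b$ a second time, for the product convergence $f_m^{1}\alpha_m\to f_0^{1}\alpha_0$ in $D'$, via an equi-integrability/diagonal argument. The paper needs nothing of the sort: once $\alpha_m$ is locally bounded in $L^{k/(k-1))}$ and converges in $D'$ (which, as you correctly note, follows from Theorem~\ref{th:main} with $p=k>k-1$), and once $f_m^{1}\to f_0^{1}$ strongly in an $L^s$ that is H\"older-conjugate to some $L^{(k-\varepsilon)/(k-1)}$ (available from the compact Sobolev embedding of $W^{1,k-\varepsilon}$), the product passes to the limit by the same H\"older-plus-density splitting used in Step~II of Theorem~\ref{th:main}. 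No uniform vanishing of $\varepsilon\int|Df_m|^{k-\varepsilon}$ enters that step. In short, you have placed the crucial hypothesis at the wrong point in the argument: it secures the integration-by-parts, not the product limit.
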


\begin{Theorem}\label{thm:main-3}
    Let
    $f_m = (f_{m}^{1}, \dots, f_{m}^{k})$,
    $1 \leq k \leq n$,
    $m \in \mathbb{N}$,
    be a sequence of mappings 
    locally bounded in 
    $W^{1,k)}$
    and
    with 
    $Df_m \in L^{k)}_{b}$.
    Assume that 
    $f_m$
    converges in
    $L^{1}_{\rm loc}$ 
    to
    $f_0 = (f_{0}^{1}, \dots, f_{0}^{k})$
    as 
    $m \to \infty$
    and
    all $k$-minors of matrix 
    $Df_m$
    are nonnegative.
    It follows that
    $\omega_m = df_{m}^{1}\wedge \dots \wedge df_{m}^{k}$
    converges to
    $\omega_0 = df_{0}^{1}\wedge \dots \wedge df_{0}^{k}$
    in
    $D'$
    and is locally bounded in 
    $L^{1)}$.
\end{Theorem}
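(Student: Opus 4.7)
The plan is to reduce Theorem~\ref{thm:main-3} to Theorem~\ref{thm:main-2}. Both theorems share the same conclusion (distributional convergence $\omega_m \to \omega_0$ together with local boundedness of $\{\omega_m\}$ in $L^{1)}$), and their hypotheses differ only in one point: Theorem~\ref{thm:main-2} requires \emph{a priori} local integrability of $\omega_m$ and $\omega_0$, whereas Theorem~\ref{thm:main-3} replaces this with nonnegativity of the $k$-minors of $Df_m$. The task therefore reduces to deducing the local integrability of $\omega_m$ and $\omega_0$ from the orientation-preserving hypothesis.

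For the integrability of $\omega_m$, I would adapt S.~M\"{u}ller's higher-integrability principle to the grand Sobolev setting. For each $k$-tuple of indices $(i_1,\dots,i_k)$, the corresponding $k$-minor of $Df_m$ admits the distributional divergence representation
\[
\det\!\bigl(\partial_{i_j} f_m^\ell\bigr)_{j,\ell=1}^{k}
\;=\; \sum_{j=1}^{k} \partial_{i_j}\!\bigl(f_m^1\,\mathrm{cof}_{1j} Df_m\bigr),
\]
valid distributionally thanks to the hypothesis $Df_m \in L^{k)}_b$. Testing against smooth cut-offs, combined with the Sobolev embedding of $W^{1,k)}$ into a suitable Orlicz space and the vanishing-modulus property encoded in the subscript $b$, should produce uniform local $L^1$-bounds for each nonnegative minor, hence for the coefficients of $\omega_m$. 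By extracting an a.e.\ convergent subsequence from $f_m \to f_0$ in $L^1_{\rm loc}$, pointwise nonnegativity transfers to the minors of $Df_0$, and the same reasoning (together with lower semicontinuity of the grand norm, which places $Df_0$ in $L^{k)}_b$) yields $\omega_0 \in L^1_{\rm loc}$.

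With both integrability claims established, Theorem~\ref{thm:main-2} applies directly and produces the desired convergence and local $L^{1)}$ bound. The chief obstacle is the second step: adapting M\"{u}ller's argument to the grand Lebesgue space $L^{k)}_b$ at the borderline exponent $p=k$, where the cofactors sit only in $L^{k/(k-1))}$ and fall outside the scope of the classical proof. The vanishing-modulus hypothesis is the right substitute for the classical strict-inequality condition $Df \in L^{k+\varepsilon}$: it supplies the small parameter needed to justify the distributional identity and to close the estimate derived from the divergence form, absorbing the borderline terms at the end.
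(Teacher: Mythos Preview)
Your reduction to Theorem~\ref{thm:main-2} is not how the paper proceeds, and the reduction itself has a real gap. The paper does \emph{not} first manufacture $\omega_m,\omega_0\in L^1_{\mathrm{loc}}$ and then invoke Theorem~\ref{thm:main-2}. Instead it reruns the inductive scheme of Theorem~\ref{th:main}, and at the one place where the borderline exponent $p=k+1$ causes trouble---namely the identity $\omega_m=d\psi_m$, equation~\eqref{eq:weak_J}---it invokes Greco's result (Lemma~\ref{lem:weak_J>0}) directly. That lemma says precisely that for $|Df|\in L^{n)}_b$ and $J_f\ge 0$ a.e.\ the integration-by-parts formula~\eqref{eq:Det=det} holds, which is exactly what the inductive step needs. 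No separate integrability argument is required; the nonnegativity hypothesis is consumed in one line by citing~\cite{Gre1993}. What you propose---adapting M\"uller's argument to the grand setting---is essentially a rederivation of Lemma~\ref{lem:weak_J>0}, so you are redoing work the paper imports.

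The gap in your outline is the treatment of $f_0$. You write that ``by extracting an a.e.\ convergent subsequence from $f_m\to f_0$ in $L^1_{\mathrm{loc}}$, pointwise nonnegativity transfers to the minors of $Df_0$.'' This is false as stated: a.e.\ convergence of $f_m$ says nothing about pointwise convergence of $Df_m$, and the $k$-minors are nonlinear in the gradient, so sign information does not pass to the limit this way. Likewise, ``lower semicontinuity of the grand norm, which places $Df_0$ in $L^{k)}_b$'' is unjustified: weak lower semicontinuity of the $L^{k-\varepsilon}$ norms gives at best $Df_0\in L^{k)}$, but the vanishing-modulus condition defining $L^{k)}_b$ is not preserved under weak limits in the non-reflexive space $L^{k)}$. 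Without these two ingredients you cannot verify the hypotheses of Theorem~\ref{thm:main-2} for the limit map, and the reduction collapses. The paper sidesteps this entirely by never needing $\omega_0\in L^1_{\mathrm{loc}}$ as a separate input: the distributional limit $d\psi_0$ is identified with $\omega_0$ through the same mechanism (Lemma~\ref{lem:conv_dif_GL}) used in the proof of Theorem~\ref{th:main}.
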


The stated results are similar to those of \cite{GreIwaSub2003} but the proof, based on a technique used by Yu.~Reshetnyak \cite{Resh1982}, is comparatively simple and requires us to know only basic properties of the theory of differential forms and Sobolev spaces.
Moreover, this method allows us to easily extend the results for grand Sobolev spaces
$W^{1,p)}$ 
to grand Sobolev spaces with respect to measurable functions 
$W^{1,p),\delta}$,
as stated in Theorem~\ref{th:main}.

\newpage
\section{Preliminaries}\label{sec:preliminaries}

For a bounded open subset 
$\Omega$
in 
$\mathbb{R}^n$,
$n\geq 1$,
vector functions
$f = (f^1, \dots, f^n) \colon \Omega \to \mathbb{R}^n$
are called mappings of the Sobolev class
$W^{1,p} (\Omega,\mathbb{R}^n)$,
$1\leq p \leq \infty$, 
if all coordinate functions
$f^i$,
$i = 1,2,\dots, n$,
belong to 
$W^{1,p} (\Omega,\mathbb{R})$.
Throughout this paper the symbol 
$Df$ 
stands for the differential matrix 
and 
$J_f$ 
denotes its determinant, the Jacobian.

\begin{Definition}
    For 
    $0 < q < \infty$
    the \textit{grand Lebesgue space} 
    $L^{q)}(\Omega)$
    consists of all measurable functions
    $f \colon \Omega \to \mathbb{R}$
    such that
    \begin{equation}\label{def:grand_norm}
        \|f\|_{L^{q)}} = \sup\limits_{0 < \varepsilon < \varepsilon_0} 
        \left(\frac{\varepsilon}{|\Omega|} \int\limits_{\Omega} |f(x)|^{q-\varepsilon} \, dx \right)^{\frac{1}{q-\varepsilon}} < \infty,
    \end{equation}
    where 
    $\varepsilon_0 = q-1$ 
    if 
    $q > 1$
    and
    $\varepsilon_0 \in (0,q)$
    if
    $0<q\leq 1$.
\end{Definition}

Grand Lebesgue spaces have been thoroughly studied by many different authors. 
We refer the interested reader to the reviews given in articles \cite{DonSboSch2013,FioForGog2018,JaiSinSin2017} and \cite[\S 7.2]{CasRaf2016}.
However, we now state some basic properties of these spaces which will be useful for the results that follow.

For the case 
$q>1$, 
the continuous embeddings
$$L^{q} \subset L^{q)} \subset L^{q-\varepsilon}, \quad
\text{ for } 
0 < \varepsilon < q-1,$$
hold, and are strict. 
This can be easily seen by considering  
a unit ball
$B(0,1)$
and the function
$f(x) = |x|^{-\frac{n}{q}}$.
In this case 
$f$
belongs to 
$L^{q)}(B(0,1))$
but not 
$L^{q}(B(0,1))$.

Spaces 
$L^{q)}$
for 
$q>1$
are known to be non-reflexive Banach spaces 
\cite{Fio2000}.

\begin{Definition} 
    The space 
    $L_b^{q)}$
    consists of all functions 
    $f \in L^{q)}$
    such that
    \begin{equation*}
        \lim\limits_{\varepsilon \to 0+} \varepsilon \int\limits_{\Omega} |f(x)|^{q-\varepsilon} \, dx = 0.
    \end{equation*}
\end{Definition}

The space 
$L_b^{q)}$
is 
the closure of 
$L^q$ 
in the norm 
$\|\cdot\|_{L^{q)}}$
and 
$L_b^{q)} \neq L^{q)}$
see \cite{CarSbo1997,Gre1993}.
The validity of this latter claim is easy to see by considering  once again the function 
$f(x) = |x|^{-\frac{n}{q}}$
on the unit ball 
$B(0,1)$, 
for which $f\not\in L_b^{q)}(B(0,1))$,
since
$$\varepsilon \int\limits_{\Omega} |f(x)|^{q-\varepsilon} \, dx = \frac{q}{n} |B(0,1)| \not \to 0
\text{ as } 
\varepsilon \to 0+.$$

The embeddings 
$L^{q, p} \subset L^{q, \infty} \subset L^{q)}$
and
$L^{q}(\log L)^{-1} \subset L_b^{q)} \subset L^{q)}$
also hold, 
where 
$L^{q, p}$ are Lorenz spaces, 
and
$L^{q}(\log L)^{-1}$
are Orlicz spaces.
For further discussions of embeddings of these spaces, we refer the reader to \cite{DonSboSch2013,GiaGrePas2010,Gre1993,IwaSbo1992}.

As is seen from \eqref{def:grand_norm}, grand Lebesgue spaces can be characterized as controlling the blow-up of the Lebesgue norm by the parameter 
$\varepsilon$.
Indeed, the norm of the function
$f$,
belonging to
$\bigcap\limits_{0<\varepsilon < q-1} L^{q-\varepsilon}$
but not
$L^q$,
must blow up,
i.e.,
$\|f\|_{L^{q-\varepsilon}} \to \infty$,
when 
$\varepsilon \to 0$.
Thus, a natural generalization is to substitute for
$\varepsilon$
a measurable function 
$\delta(\varepsilon)$,
which is positive a.e.\ \cite{CapForGio2013}.

\begin{Definition}\label{def:GLd}
    For $0 < q < \infty$ 
    the \textit{grand Lebesgue space} 
    $L^{q), \delta}(\Omega)$
    \textit{with respect to 
    $\delta$}
    consists of all measurable functions
    $f \colon \Omega \to \mathbb{R}$
    such that
    \begin{equation*}
        \|f\|_{L^{q),\delta}} = \sup\limits_{0 < \varepsilon < \varepsilon_0} 
        \left(\frac{\delta(\varepsilon)}{|\Omega|} \int\limits_{\Omega} |f(x)|^{q-\varepsilon} \, dx \right)^{\frac{1}{q-\varepsilon}} < \infty,
    \end{equation*}
    where
    $\delta \in L^{\infty}((0,\varepsilon_0), (0,1])$
    is a left continuous function 
    such that
    $\lim\limits_{\varepsilon \to 0+} \delta(\varepsilon) = 0$
    and 
    $\delta^{\frac{1}{q-\varepsilon}}(\varepsilon)$
    is nondecreasing,
    $\varepsilon_0 = q-1$ 
    if 
    $q > 1$
    and
    $\varepsilon_0 \in (0,q)$
    if
    $0<q\leq 1$.
\end{Definition}
If 
$\delta(\varepsilon) = \varepsilon$,
the space 
$L^{q),\delta}$
is equivalent to 
$L^{q)}$.
If 
$\delta(\varepsilon) = \varepsilon^{\theta}$
with
$\theta >0$,
we denote the resulting space by 
$L^{q),\theta}$.
It was first introduced and studied in \cite{GreIwaSbo1997}. 
In \cite{CapForGio2013} it was also shown that for 
$q>1$
$$L^{q} \subset L^{q),\delta} \subset L^{q-\varepsilon} \quad
\text{ for } 
0 < \varepsilon \leq q-1.$$

The definition of convergence in the sense of distributions is standard.
    We say that the sequence 
    $f_m \in X(\Omega)$
    converges \textit{in the sense of distributions} 
    (\textit{in} 
    $D'$)
    to 
    $f_0$
    if, for every  function 
    $\varphi \in C_0^{\infty}(\Omega)$,
    \begin{equation*}
        \int\limits_\Omega f_m(x) \varphi(x) \, dx \to \int\limits_\Omega f_0(x) \varphi(x) \, dx \quad \text{as } m \to \infty.
    \end{equation*}
It is well-known that 
$f_m$
converges to 
$f_0$
weakly in
$L^{p}$
if and only if the sequence
$\{f_m\}_{m\in\mathbb{N}}$
is bounded in 
$L^{p}$
and 
$f_m$
converges in the sense of distributions
to 
$f_0$.

%========================================

We now make some brief comments on exterior algebra that will be useful for the results that follow. 
Let 
$\omega$
be differential $k$-forms,
where
$1 \leq k \leq n$.
If 
$I = (i_1,i_2, \dots, i_k)$
is a $k$-tuple
with
$1 \leq i_1 < i_2 < \dots < i_k \leq n$,  
a differential form
$\omega$
can be represented as 
\begin{equation*}
    \omega = \sum \limits_{I} \omega_{I}(x)\, dx^{i_1} \wedge \dots \wedge dx^{i_k} = \sum \limits_{I} \omega_{I}(x)\, dx^{I}.
\end{equation*}
Note that the sequence of $k$-forms
$\omega_m$
converges 
to 
$\omega_0$
in  
$D'$
as 
$m \to \infty$
if the coefficients of the forms 
$\omega_m$
converge in  
$D'$
to the corresponding 
coefficients of 
$\omega_0$.

The calculus of differential forms is a powerful tool in the study of the analytical and geometrical properties of mappings. 
Thus, for mappings 
$f$
in Sobolev class 
$W^{1,p}$,
with 
$p\geq n$,
the Jacobian can be represented by the $n$-form
$$
    J_f =df_1\wedge \dots \wedge df_n. 
$$

To deal with the borderline case $p=k$
we need the integration-by-parts formula,
\begin{equation}\label{eq:Det=det}
    \int\limits_{\Omega} \varphi(x) J_f(x) \,dx = 
    - \int\limits_{\Omega} f^n \, df^1 \wedge df^2 \dots \wedge df^{n-1} \wedge d \varphi.
\end{equation}
It is easy to see that 
\eqref{eq:Det=det}
holds for 
$f\in W^{1,n}(\Omega)$.
In general, Sobolev embeddings and the H\"older inequality ensure that
for 
$f\in W^{1,\frac{n^2}{n+1}}_{\rm loc} (\Omega)$,
the right-hand-side of \eqref{eq:Det=det} can be considered as a distribution,
called 
    the \textit{distributional Jacobian}
    $\mathcal{J}_f$, 
    and
    defined 
    by the rule
    $$
        \mathcal{J}_f[\varphi] = - \int\limits_{\Omega} f^n \, df^1 \wedge df^2 \dots \wedge df^{n-1} \wedge d \varphi
    $$
    for every test function
    $\varphi \in C^{\infty}_0 (\Omega)$.
A function 
$f = x+ \frac{x}{|x|}$,
with 
$\Omega$
being a unit ball, 
shows that 
\eqref{eq:Det=det}
fails as soon as 
$f\in W^{1,p}(\Omega)$,
$p<n$.
The natural question of the coincidence of the distributional and the point-wise Jacobians is thoroughly studied in
\cite{Gre1993,IwaSbo1992,Mul1990}, as well as in \cite[\S 7.2]{IwaMar2001} and \cite[\S 6.2]{HajIwaMalOnn2008}.
We need the following results for grand Lebesgue spaces.

\begin{Lemma}[\hspace{-.5pt}{\cite[Theorem~4.1]{Gre1993}}]\label{lem:weak_J}
    Let 
    $f = (f^1, \dots, f^n) \in W^{1,1}_{\rm loc} (\Omega)$
    be a function such that 
    $J_f \in L^{1}_{\rm loc} (\Omega)$
    and 
    $|Df| \in L^{n)}_{b} (\Omega)$.
    Then \eqref{eq:Det=det} holds
    for all compactly supported test functions
    $\varphi \in C^{\infty}_0 (\Omega)$.
\end{Lemma}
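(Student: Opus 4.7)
\medskip
\noindent\emph{Proof plan.} The strategy is to regularize $f$ by convolution with a standard smooth mollifier $\eta_\rho$ supported in $B(0,\rho)$ and to pass to the limit $\rho \to 0^+$ in both sides of \eqref{eq:Det=det}. Set $f_\rho := f * \eta_\rho$, fix $\varphi \in C_0^\infty(\Omega)$, and put $K := \operatorname{supp}(\varphi) \Subset \Omega$. For $\rho$ sufficiently small, $f_\rho \in C^\infty$ on a neighborhood of $K$, so that by the classical integration by parts
\[
    \int_\Omega \varphi\, J_{f_\rho}\, dx
    = -\int_\Omega f_\rho^n\, df_\rho^1 \wedge \dots \wedge df_\rho^{n-1} \wedge d\varphi.
\]
The task reduces to showing that each side of this identity tends, as $\rho \to 0^+$, to the corresponding expression for $f$.

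The convergence of the right-hand side would be obtained as follows. The hypothesis $|Df| \in L^{n)}_b(\Omega)$ means $\varepsilon \int |Df|^{n-\varepsilon}\, dx \to 0$ as $\varepsilon \to 0^+$, and $L^{n)}_b$ is the closure of $L^n$ in the grand Lebesgue norm; together these facts give $f_\rho \to f$ in $W^{1, n-\varepsilon}_{\rm loc}$ for every small $\varepsilon > 0$, with uniform bounds controlled by $\|Df\|_{L^{n)}_b}$. The difference between the two right-hand sides decomposes, by multilinearity of the wedge product, into a telescoping sum of $n$ terms, each of which contains exactly one factor of the form $f^n - f_\rho^n$ or $d(f^i - f_\rho^i)$ together with $n-1$ gradient factors drawn from either $f$ or $f_\rho$. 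Applying a generalized H\"older inequality with exponents $n-\varepsilon$ on the gradient factors, and the Sobolev embedding $W^{1, n-\varepsilon}_{\rm loc} \hookrightarrow L^{(n-\varepsilon)^{*}}_{\rm loc}$ on the single factor $f^n - f_\rho^n$, one checks that each summand tends to zero for suitably chosen $\varepsilon$, which delivers the required convergence
\[
    \int_\Omega f_\rho^n\, df_\rho^1 \wedge \dots \wedge df_\rho^{n-1} \wedge d\varphi
    \;\longrightarrow\;
    \int_\Omega f^n\, df^1 \wedge \dots \wedge df^{n-1} \wedge d\varphi.
\]

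For the left-hand side, a subsequence of $Df_\rho$ converges to $Df$ almost everywhere on $K$ (extracted from $L^{n-\varepsilon}_{\rm loc}$ convergence), so $J_{f_\rho} \to J_f$ a.e.\ on $K$. The remaining step is to upgrade this a.e.\ convergence to $L^1(K)$ convergence via Vitali's theorem, which requires \emph{equi-integrability} of $\{J_{f_\rho}\}_\rho$ on $K$. Together with the assumption $J_f \in L^1_{\rm loc}(\Omega)$, this would identify the limit as $\int \varphi\, J_f\, dx$ and complete the proof.

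The main obstacle is precisely this equi-integrability step. Since $|Df|$ is not assumed to lie in $L^n$, the naive bound $\int_E |J_{f_\rho}|\, dx \leq \|Df\|_{L^n}^n$ is unavailable, and equi-integrability must be squeezed out of the genuinely weaker condition $|Df| \in L^{n)}_b$. The proposed route is to combine the pointwise estimate $|J_{f_\rho}| \leq C |Df_\rho|^n$ with the Jensen inequality $|Df_\rho|^n \leq |Df|^n * \eta_\rho$ (valid because $\eta_\rho$ is a probability density), and then to exploit the precise smallness $\varepsilon \int |Df|^{n-\varepsilon}\, dx \to 0$ by interpolation between the $L^{n-\varepsilon}$ norms to produce a uniform modulus of continuity $\sup_\rho \int_E |J_{f_\rho}|\, dx \to 0$ as $|E| \to 0^+$. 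Making this interpolation rigorous, so that the grand Lebesgue smallness is transferred to equi-integrability of the Jacobians without loss, is the delicate technical point of the argument.
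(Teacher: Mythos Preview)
The paper does not prove this lemma at all: it is quoted verbatim as \cite[Theorem~4.1]{Gre1993} and used as a black box. So there is no ``paper's proof'' to compare against, and your proposal must be judged on its own merits.

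Your outline is the natural one and the convergence of the right-hand side is handled correctly. The gap is exactly where you locate it, but the route you propose for closing it is a dead end. You suggest bounding $|J_{f_\rho}|\le C|Df_\rho|^n$ (Hadamard) and then using Jensen to get $|Df_\rho|^n\le |Df|^n*\eta_\rho$. The trouble is that under the hypothesis $|Df|\in L^{n)}_b\setminus L^n$ the function $|Df|^n$ is \emph{not} locally integrable, so the right-hand side of your Jensen bound gives no usable control: $\int_E |Df|^n*\eta_\rho\,dx$ need not be finite, let alone small uniformly in $\rho$. More decisively, if $\{|Df_\rho|^n\}_\rho$ were equi-integrable on $K$, then Vitali together with the a.e.\ convergence $|Df_\rho|^n\to|Df|^n$ would force $|Df|^n\in L^1(K)$, i.e.\ $|Df|\in L^n(K)$, which is strictly stronger than the hypothesis. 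Thus the Hadamard bound is provably too crude here: it discards the null-Lagrangian structure of the determinant, and no amount of ``interpolation between $L^{n-\varepsilon}$ norms'' can recover equi-integrability from it.

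What Greco's argument actually exploits is precisely that cancellation structure. The key input (going back to \cite{IwaSbo1992,Mul1990}) is a quantitative estimate, obtained via Hodge decomposition and commutator/div--curl bounds, to the effect that the discrepancy between the pointwise and distributional Jacobians of an approximant is controlled by a quantity of order $\varepsilon\int |Df|^{n-\varepsilon}\,dx$; the hypothesis $|Df|\in L^{n)}_b$ is exactly the statement that this quantity tends to $0$ as $\varepsilon\to 0^+$, and the assumption $J_f\in L^1_{\rm loc}$ is used to make sense of the pointwise side in the limit. In short, equi-integrability of $\{J_{f_\rho}\}$ is not proved by dominating the Jacobian pointwise; it comes from an integral estimate that sees the determinant as a special multilinear object, not merely as a polynomial in the entries of $Df$. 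Your write-up should either invoke that estimate explicitly or, more honestly, cite \cite{Gre1993} as the present paper does.
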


\begin{Lemma}[\hspace{-.5pt}{\cite[Corollary~4.1]{Gre1993}}]\label{lem:weak_J>0}
    Let 
    $f = (f^1, \dots, f^n) \in W^{1,1}_{\rm loc} (\Omega)$
    be a function such that 
    $J_f (x) \geq 0$ 
    a.e.\ in 
    $\Omega$
    and 
    $|Df| \in L^{n)}_{b} (\Omega)$.
    Then \eqref{eq:Det=det} holds
    for all compactly supported test functions
    $\varphi \in C^{\infty}_0 (\Omega)$.
\end{Lemma}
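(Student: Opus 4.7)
The plan is to reduce the statement to Lemma~\ref{lem:weak_J}. The only hypothesis of Lemma~\ref{lem:weak_J} that is not directly assumed here is $J_f \in L^1_{\rm loc}(\Omega)$, so it suffices to deduce this integrability from the sign condition $J_f\geq 0$ together with $|Df|\in L^{n)}_b(\Omega)$. This is a Müller-type higher integrability statement in the grand Sobolev setting.

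To prove $J_f\in L^1_{\rm loc}(\Omega)$, fix a compact $K\Subset \Omega$ and mollify $f_\varepsilon = f\ast \rho_\varepsilon$ on a neighborhood of $K$. For smooth $f_\varepsilon$ the classical integration-by-parts identity
\begin{equation*}
\int_\Omega \varphi\, J_{f_\varepsilon}\, dx = -\int_\Omega f^n_\varepsilon\, df^1_\varepsilon\wedge\dots\wedge df^{n-1}_\varepsilon\wedge d\varphi
\end{equation*}
holds exactly. I would bound the right-hand side uniformly in $\varepsilon$ by H\"older's inequality with the exponents $\bigl(\tfrac{n-\delta}{1-\delta},\,\tfrac{n-\delta}{n-1},\,\infty\bigr)$ for a small $\delta>0$ (note $\tfrac{1-\delta}{n-\delta}+\tfrac{n-1}{n-\delta}=1$), using the Sobolev embedding $W^{1,n-\delta}_{\rm loc}\hookrightarrow L^{(n-\delta)/(1-\delta)}_{\rm loc}$ to dominate $f^n_\varepsilon$. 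The inclusion $|Df|\in L^{n-\delta}_{\rm loc}$, valid for every small $\delta$ because $|Df|\in L^{n)}_b$, yields
\begin{equation*}
\left|\int_\Omega \varphi\, J_{f_\varepsilon}\, dx\right| \leq C\, \|D\varphi\|_{L^\infty}
\end{equation*}
with $C$ independent of $\varepsilon$.

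Next, since $Df_\varepsilon(x)\to Df(x)$ at every Lebesgue point of $Df$, one has $J_{f_\varepsilon}(x)\to J_f(x)$ a.e., and the sign condition $J_f\geq 0$ implies $(J_{f_\varepsilon})^+\to J_f$ a.e. A Fatou-type passage to the limit, combined with a control on $(J_{f_\varepsilon})^-$ extracted from the defining equi-integrability property $\varepsilon\int_\Omega|Df|^{n-\varepsilon}\,dx\to 0$ of $L^{n)}_b$, upgrades the uniform bound above to
\begin{equation*}
\int_\Omega \varphi\, J_f\, dx \leq C\, \|D\varphi\|_{L^\infty}
\end{equation*}
for every nonnegative $\varphi\in C_0^\infty(\Omega)$ supported in $K$. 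This proves $J_f\in L^1_{\rm loc}(\Omega)$, after which Lemma~\ref{lem:weak_J} applies directly to give \eqref{eq:Det=det}.

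The main obstacle is the control of the negative part $(J_{f_\varepsilon})^-$ in the Fatou step: the naive pointwise bound $|J_{f_\varepsilon}|\leq C|Df_\varepsilon|^n$ is \emph{not} uniformly integrable on $K$, since $|Df|$ need not belong to $L^n$. The correct treatment relies on the $\mathcal{H}^1$-cancellation underlying Müller's classical theorem, transplanted to the grand Lebesgue setting via the property $\varepsilon\int|Df|^{n-\varepsilon}\,dx\to 0$, which plays the role of equi-integrability at the borderline exponent~$n$.
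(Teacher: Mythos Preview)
The paper does not supply its own proof of this lemma: it is quoted from \cite[Corollary~4.1]{Gre1993} and used as a black box. There is therefore nothing in the present paper to compare your argument against.

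On the merits of your proposal: the reduction to Lemma~\ref{lem:weak_J} by first establishing $J_f\in L^1_{\rm loc}(\Omega)$ is exactly the right strategy, and is how Greco derives his Corollary~4.1 from his Theorem~4.1. But your argument contains a genuine gap precisely where you flag it. Mollification destroys the sign condition, so $J_{f_\varepsilon}$ can take large negative values; your uniform bound from Step~2 is on $\int\varphi\,J_{f_\varepsilon}$, not on $\int\varphi\,|J_{f_\varepsilon}|$, and Fatou applied to $(J_{f_\varepsilon})^+$ only yields $\int\varphi\,J_f\le C$ once you already control $\int\varphi\,(J_{f_\varepsilon})^-$ uniformly in $\varepsilon$. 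Your last paragraph gestures at ``$\mathcal{H}^1$-cancellation'' and the vanishing $n$-modulus condition but does not produce an estimate; as written this is an outline, not a proof. In the original source the local integrability of $J_f$ under $J_f\ge 0$ and $|Df|\in L^{n)}$ is obtained through the Iwaniec--Sbordone machinery (Hodge decomposition and the resulting quantitative inequality bounding $\int_B J_f$ by a power of $\|Df\|_{L^{n)}(2B)}$), not by mollification-plus-Fatou. If you want to complete your route, you must either supply that estimate for $(J_{f_\varepsilon})^-$ explicitly or invoke the Iwaniec--Sbordone integrability theorem directly and then apply Lemma~\ref{lem:weak_J}.
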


Before we proceed to the proof of the main results, we need the following auxiliary lemma, which can be found in \cite[\S 4.5]{Resh1982},
and for which we now provide a proof for the convenience of the reader.

\begin{Lemma}\label{lem:conv_dif}
    Let 
    $\omega_m$
    be a sequence of differential $k$-forms, 
    bounded in 
    $L^{1}_{\rm loc} (\Omega)$,
    that converges in  
    $D'$
    to a form
    $\omega_0$
    as
    $m \to \infty$.
    Assume that each of the forms
    $\omega_m$,
    $m \in \mathbb{N}$,
    has in 
    $\Omega$
    a generalized differential,
    and that the sequence 
    $d\omega_m$
    is bounded in 
    $L^{1}_{\rm loc} (\Omega)$.
    It follows that the forms
    $d\omega_m$
    converge to
    $d\omega_0$
    in
    $D'$
    as 
    $m \to \infty$.
\end{Lemma}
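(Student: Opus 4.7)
The plan is to reduce the claimed $D'$-convergence of $d\omega_m$ to the assumed $D'$-convergence of $\omega_m$ by means of a single integration-by-parts against a smooth compactly supported test form. Because convergence in $D'$ of a $(k+1)$-form means coefficient-wise convergence against scalar test functions, it suffices to verify the pairing against arbitrary smooth compactly supported $(n-k-1)$-forms $\varphi$, since the coefficients of $\varphi$ exhaust $C_0^\infty(\Omega)$.

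First, I would fix such a test form $\varphi \in C_0^\infty(\Omega, \Lambda^{n-k-1})$. Since each $\omega_m$ possesses a generalized differential $d\omega_m$ which is locally integrable, the distributional integration-by-parts formula
\begin{equation*}
    \int_\Omega d\omega_m \wedge \varphi = (-1)^{k+1}\int_\Omega \omega_m \wedge d\varphi
\end{equation*}
holds for every $m \in \mathbb{N}$. Next, $d\varphi$ is again a smooth compactly supported $(n-k)$-form, so expanding the wedge product into a finite sum of $C_0^\infty$-coefficients paired against coefficients of $\omega_m$, the assumption that $\omega_m \to \omega_0$ in $D'$ yields
\begin{equation*}
    \int_\Omega \omega_m \wedge d\varphi \;\longrightarrow\; \langle \omega_0, d\varphi\rangle \quad\text{as } m \to \infty.
\end{equation*}
Finally, by the very definition of the distributional exterior differential of $\omega_0$, one has $\langle d\omega_0,\varphi\rangle = (-1)^{k+1}\langle \omega_0, d\varphi\rangle$; combining the three displays gives $\int_\Omega d\omega_m \wedge \varphi \to \langle d\omega_0, \varphi\rangle$, which is exactly the $D'$-convergence claimed.

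There is no real analytic obstacle here; the only delicate point is conceptual rather than computational, namely that one must consistently interpret $d\omega_0$ as the distributional exterior differential of the limit (well defined since $\omega_0$ inherits the structure of a distribution). The $L^1_{\rm loc}$-boundedness hypotheses on $\omega_m$ and $d\omega_m$ are not needed to pass to the limit in $D'$ per se, but they guarantee that $\int_\Omega d\omega_m \wedge \varphi$ and $\int_\Omega \omega_m \wedge d\varphi$ are ordinary Lebesgue integrals and hence that the integration-by-parts identity in the first step applies in its classical form. The only other item to watch is bookkeeping of signs in the Leibniz/IBP rule $d(\omega_m \wedge \varphi) = d\omega_m \wedge \varphi + (-1)^k \omega_m \wedge d\varphi$, which fixes the sign $(-1)^{k+1}$ above.
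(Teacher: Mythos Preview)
Your proof is correct and follows essentially the same route as the paper's: fix a test $(n-k-1)$-form, apply the defining integration-by-parts identity for the generalized differential, and pass to the limit using the assumed $D'$-convergence of $\omega_m$ against the resulting $C_0^\infty$ $(n-k)$-form $d\varphi$. Your sign $(-1)^{k+1}$ agrees with the paper's $(-1)^{k-1}$, and your remarks on the role of the $L^1_{\rm loc}$ bounds are an accurate gloss on why the pairings are honest integrals.
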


\begin{proof}
    Consider an arbitrary
    $C^\infty$-smooth, compactly supported
    $(n-k-1)$-form
    $\alpha$.
    From the definition of a generalized differential we have
    $$
        \int\limits_\Omega \omega_m \wedge d\alpha = (-1)^{k-1}\int\limits_\Omega d\omega_m \wedge \alpha.
    $$
    Since 
    $\omega_m \to \omega_0$ in $D'$
    and 
    $d\alpha $
    is a
    $(n-k)$-form of the class $C_0^\infty (\Omega)$, 
    we obtain
    $$
        \int\limits_\Omega \omega_m \wedge d\alpha \xrightarrow[m \to \infty]{} \int\limits_\Omega \omega_0 \wedge d\alpha = (-1)^{k-1}\int\limits_\Omega d\omega_0 \wedge \alpha.
    $$
    And finally 
    $$
        \int\limits_\Omega d\omega_m \wedge \alpha \xrightarrow[m \to \infty]{}  \int\limits_\Omega d\omega_0 \wedge \alpha
    $$
    for all test $(n-k-1)$-forms
    $\alpha \in C_0^\infty (\Omega)$.
\end{proof}

We now make use of Lemma~\ref{lem:conv_dif} for grand Lebesgue spaces.

\begin{Lemma}\label{lem:conv_dif_GL}
    Let 
    $\omega_m$
    be a sequence of differential $k$-forms, 
    locally bounded in 
    $L^{p),\delta} (\Omega)$,
    that converges in  
    $D'$
    to a form
    $\omega_0$
    as
    $m \to \infty$.
    Assume that each of the forms
    $\omega_m$,
    $m \in \mathbb{N}$,
    has a generalized differential in 
    $\Omega$,
    and that the sequence 
    $d\omega_m$
    is locally bounded in 
    $L^{q),\delta} (\Omega)$.
    It follows that the forms
    $d\omega_m$
    converge to
    $d\omega_0$
    in
    $D'$
    as 
    $m \to \infty$.
\end{Lemma}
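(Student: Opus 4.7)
The plan is to reduce Lemma \ref{lem:conv_dif_GL} directly to Lemma \ref{lem:conv_dif} by means of the continuous embedding of grand Lebesgue spaces into ordinary Lebesgue spaces recalled in Section \ref{sec:preliminaries}: for $r > 1$ and any $0 < \eta \leq r-1$, one has $L^{r),\delta}(\Omega') \subset L^{r-\eta}(\Omega')$ on any bounded subdomain $\Omega'$.

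Fix an arbitrary relatively compact open $\Omega' \Subset \Omega$. Applying the embedding with $r=p$ and $\eta=p-1$ converts the uniform $L^{p),\delta}(\Omega')$-bound of $\{\omega_m\}$ into a uniform $L^1(\Omega')$-bound, and the analogous choice $r=q$, $\eta=q-1$ yields a uniform $L^1(\Omega')$-bound on $\{d\omega_m\}$. Because $\Omega'$ was arbitrary, $\{\omega_m\}$ and $\{d\omega_m\}$ are locally bounded in $L^1_{\rm loc}(\Omega)$. The remaining hypotheses of Lemma \ref{lem:conv_dif} — the $D'$-convergence $\omega_m \to \omega_0$ and the existence of generalized differentials $d\omega_m$ — are inherited directly from the present hypotheses. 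Thus Lemma \ref{lem:conv_dif} applies verbatim and yields $d\omega_m \to d\omega_0$ in $D'$.

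I anticipate no substantive obstacle: the statement is essentially a cosmetic upgrade of Lemma \ref{lem:conv_dif} to the grand-Lebesgue setting, with the embedding doing all the work. The one point worth caution is the borderline exponents $p=1$ or $q=1$, which fall outside the stated embedding and are precisely the regime relevant to the $L^{1)}$-valued conclusions of Theorems \ref{thm:main-2} and \ref{thm:main-3}. In that case one simply re-runs verbatim the short integration-by-parts argument used to prove Lemma \ref{lem:conv_dif}: that argument relies only on the local integrability of each individual $\omega_m$ and $d\omega_m$ (which is implicit in the definition of a generalized differential) together with $D'$-convergence, not on any uniform $L^1_{\rm loc}$-bound of the sequence, so the full conclusion carries over without modification.
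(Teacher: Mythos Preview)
Your proposal is correct and matches the paper's approach: the paper does not give a separate proof of Lemma~\ref{lem:conv_dif_GL} at all, simply introducing it with the sentence ``We now make use of Lemma~\ref{lem:conv_dif} for grand Lebesgue spaces,'' which is precisely the reduction you spell out via the embedding $L^{r),\delta}\subset L^{r-\eta}$. Your remark that the integration-by-parts argument behind Lemma~\ref{lem:conv_dif} never actually uses the uniform $L^1_{\rm loc}$-bound (only the existence of each generalized differential and the $D'$-convergence) is a valid and useful observation that covers the borderline exponents cleanly.
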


For a mapping 
$f = (f^1, \dots f^n)\colon \Omega \to \mathbb{R}^n$,
we define the 
$k \times k$-minors of the differential matrix as
\begin{equation*}
    \frac{\partial f^I}{\partial x^J} = \frac{\partial (f^{i_1}, \dots f^{i_k})}{\partial (x^{j_1}, \dots x^{j_k})}
\end{equation*}
for ordered $k$-tuples 
$I= (i_1,i_2, \dots, i_k)$
and 
$J= (j_1,j_2, \dots, j_k)$.
The representation 
\begin{equation*}
    df^{i_1} \wedge \dots \wedge df^{i_k} = \sum \limits_{J}\frac{\partial f^I}{\partial x^J}\, dx^{j_1} \wedge \dots \wedge dx^{j_k}
\end{equation*}
is valid.

Since in the proofs we investigate the properties of a particular 
$k \times k$ minor, 
it suffices to consider mappings 
$f \colon \Omega \to \mathbb{R}^k$
instead of maps into $\mathbb{R}^n$;
also, this makes the notation simpler.
Moreover, 
the condition
\textit{``$f_m$
converges in
$L^{1}_{\rm loc}$ 
to
$f_0$
as 
$m \to \infty$''}
results from the statement
\textit{``there exists a subsequence converging weakly in 
$W^{1,q}_{\rm loc}$
to 
$f_0$
for all 
$1\leq q < p$''}.
Indeed,
by the Sobolev embeddings we can find a subsequence 
$f_{m_l}$,
which converges to 
$f_0$
in 
$L^s_{\rm loc}$,
for some 
$1\leq s < \frac{nq}{n-q}$.
The H\"older inequality and boundedness of 
$\Omega$ 
then guarantee that 
$f_0$
is also an
$L^1_{\rm loc}$-limit of 
$f_{m_l}$.

%================================================================

\section{Proof of the main results}

    We will prove Theorem~\ref{th:main} by induction on 
    $k$. 
    The case of
    $k=1$ 
    follows directly from Lemma~\ref{lem:conv_dif_GL}.
    Assume that the lemma has been proven for some general 
    $k$,
    and let 
    $f_m\colon \Omega \to \mathbb{R}^{k+1}$
    be a sequence of mappings of class 
    $W^{1,p),\delta} (\Omega)$,
    $p > k+1$.
    The sequence 
    $f_m$ 
    is locally bounded in 
    $W^{1,p),\delta} (\Omega)$,
    consequently,
    also bounded in
    $W^{1,p-\varepsilon} (\Omega)$
    for
    $0 < \varepsilon < p-1$,
    and is locally convergent in 
    $L^1$
    to 
    $f_0$.
    From the Sobolev embedding theorem
    we obtain that
    $f_m \to f_0$
    in
    $L^s$ 
    for
    $s < \frac{n(p-\varepsilon)}{n-p + \varepsilon}$.
    \smallskip\\
    \textsc{Step I.} Let us consider the forms
    \begin{equation}\label{def:uvw}
        \begin{aligned}
            & u = dy^1 \wedge d y^2 \wedge \dots \wedge dy^k, \\
            & v = (-1)^k y^{k+1} u = (-1)^k y^{k+1} dy^1 \wedge \dots \wedge dy^k, \\
            & w = u \wedge d y^{k+1} = dy^1 \wedge d y^2 \wedge \dots \wedge dy^k \wedge d y^{k+1}
        \end{aligned}
    \end{equation}
    in 
    $\mathbb{R}^{k+1}$.

    It is easy to see that 
    $w = d v$.

    Consider also the pull-backed forms
    \begin{equation}\label{def:pullbacked_forms}
        \begin{aligned}
            & \tilde \omega_m = f^*_m u = df_{m}^{1} \wedge d f_{m}^{2} \wedge \dots \wedge df_{m}^{k}, \\
            & \psi_m = f^*_m v = (-1)^k f_{m}^{k+1} \tilde \omega_m, \\
            & \omega_m = f^*_m w = df_{m}^{1} \wedge d f_{m}^{2} \wedge \dots \wedge df_{m}^{k+1}.
        \end{aligned}
    \end{equation}
    Then 
    $\omega_m = d \psi_m$
    for each 
    $m$.
    In fact 
    $\omega_m$,
    $\psi_m \in L^1$,
    since 
    for each of
    $j$,
    the functions
    $f_{m}^{j}$,
    $df_{m}^{j}$ lie in
    $L^{\tilde{p}}$,
    where
    $p > \tilde{p} \geq k+1$.
    Thus,
    for any $(n-k-1)$-form 
    $\eta \in C_0^\infty (\Omega)$,
    \begin{equation} \label{eq:weak_J}
        \int_\Omega \omega_m \wedge \eta = (-1)^{k-1} \int_\Omega \psi_m \wedge d \eta.
    \end{equation}
    By the induction hypothesis
    $\tilde \omega_m \to \tilde \omega_0$
    in 
    $D'$
    and 
    $\tilde \omega_m$
    is locally bounded in 
    $L^{p/k)}$.
    \smallskip\\
    \textsc{Step II.} 
    Let 
    $\xi$
    be an arbitrary $C^\infty$-smooth, compactly supported 
    $(n-k)$-form.
    Let us show that
    \begin{equation}\label{eq:conv_0}
        \int_\Omega f_{m}^{k+1} \tilde \omega_m \wedge \xi \to 
        \int_\Omega f_{0}^{k+1} \tilde \omega_0 \wedge \xi.
    \end{equation}
    Indeed, fix $0 < \varepsilon = \frac{k}{n+1} < p-1$. 
    Then, by the Sobolev embedding theorem,
    $f_{m}^{k+1} \to f_{0}^{k+1}$
    in
    $L^s$,
    $s < \frac{n(p-\varepsilon)}{n-p + \varepsilon}$.
    Put 
    $s' = \frac {p- \varepsilon}{k}$
    and
    $s = \frac{p - \varepsilon }{p - k - \varepsilon}$,
    then 
    $\frac{1}{s'} + \frac{1}{s} = 1$.
    Hence
    \begin{equation}\label{eq:conv_1}
        \left| \int_\Omega f_{m}^{k+1} \tilde \omega_m \wedge \xi - 
        \int_\Omega f_{0}^{k+1} \tilde \omega_m \wedge \xi \right|   \\
        \leq C \|\tilde \omega_m\|_{L^{s'} (A)} \|f_{m}^{k+1} - f_{0}^{k+1}\|_{L^{s} (A)} 
        \to 0,
    \end{equation}
    where 
    $A = \operatorname{supp} \xi$.
    Further, for any 
    $\gamma >0$ 
    let 
    $f \in C_0^\infty (\Omega)$ 
    be such that 
    $\|f - f_{0}^{k+1}\|_{L^s(\Omega)} < \gamma$.
    Then
    \begin{multline*} 
        \left| \int_\Omega f_{0}^{k+1} \tilde \omega_m \wedge \xi - 
        \int_\Omega f_{0}^{k+1} \tilde \omega_0 \wedge \xi \right| \leq 
        \left| \int_\Omega (f_{0}^{k+1} - f) \tilde \omega_m \wedge \xi \right| \\
        + 
        \left| \int_\Omega f (\tilde \omega_m \wedge \xi - \tilde \omega_0 \wedge \xi) \right|
        + \left| \int_\Omega (f - f_{0}^{k+1}) \tilde \omega_0 \wedge \xi \right| \to 0
    \end{multline*}
    as 
    $m \to \infty$.
    The first and the third terms are less than 
    $C\gamma$
    due to the choice of 
    $f$,
    the second one tends to zero by the induction hypothesis.
    Since 
    $\gamma$
    is arbitrary,
    this implies that 
    \begin{equation}\label{eq:conv_2}
        \int_\Omega f_{0}^{k+1} \tilde \omega_m \wedge \xi \to 
        \int_\Omega f_{0}^{k+1} \tilde \omega_0 \wedge \xi.
    \end{equation}
    The convergence \eqref{eq:conv_0} follows from \eqref{eq:conv_1} and \eqref{eq:conv_2}.
    This means that the sequence of forms 
    $\psi_m = f_{m}^{k+1} \tilde \omega_m$
    converges to the form 
    $\psi_0 = f_{0}^{k+1} \tilde \omega_m$
    in 
    $D'$.
    
    It remains to show that 
    the sequences of forms 
    $\psi_m$ 
    and
    $d \psi_m$
    are bounded in 
    $L^{q),\delta}$
    for 
    $q = \frac{p}{k+1}$.
    Indeed, the H\"older inequality provides

    \begin{multline*}
        \bigg(\int_\Omega |\psi_m|^{q - \varepsilon} \, dx \bigg)^{\frac{1}{q-\varepsilon}} = 
        \bigg(\int_\Omega |f_{m}^{k+1} \tilde \omega_m|^{q - \varepsilon} \, dx \bigg)^{\frac{1}{q-\varepsilon}}  \\
        \leq
        \bigg(\int_\Omega |f_{m}^{k+1}|^{(q - \varepsilon) \frac{p - \varepsilon}{q - \varepsilon}} \, dx \bigg)^{\frac{1}{p-\varepsilon}} 
        \bigg(\int_\Omega |\tilde \omega_m|^{(q - \varepsilon) \frac{p - \varepsilon}{p - q}} \, dx \bigg)^{\frac{p-q}{(p-\varepsilon)(q - \varepsilon)}}.
    \end{multline*}
    Here 
    $\frac{p - \varepsilon}{q-\varepsilon} > 1$
    as 
    $p - \varepsilon > q - \varepsilon$.
    
    Multiplying by 
    $\delta(\varepsilon)$
    and taking the supremum, we obtain
    \begin{multline} \label{est:psi}
        \|\psi_m\|_{L^{q),\delta}}  \\
        \leq
        \sup\limits_{0 < \varepsilon < q-1}\bigg(\delta(\varepsilon)\int_\Omega |f_{m}^{k+1}|^{(q - \varepsilon) \frac{p - \varepsilon}{q - \varepsilon}} \, dx \bigg)^{\frac{1}{p-\varepsilon}} 
        \bigg(\delta(\varepsilon)\int_\Omega |\tilde \omega_m|^{(q - \varepsilon) \frac{p - \varepsilon}{p - q}} \, dx \bigg)^{\frac{p-q}{(p-\varepsilon)(q - \varepsilon)}}  \\
        \leq
        \sup\limits_{0 < \varepsilon < p-1}\bigg(\delta(\varepsilon)\int_\Omega |f_{m}^{k+1}|^{p - \varepsilon} \, dx \bigg)^{\frac{1}{p-\varepsilon}} 
        \sup\limits_{0 < \varepsilon' < \frac{p}{k}-1}\bigg(\delta(\varepsilon')\int_\Omega |\tilde \omega_m|^{\frac{p}{k} - \varepsilon'} \, dx \bigg)^{\frac{1}{p/k-\varepsilon'}} 
        \\
        \leq
        \|f_{m}^{k+1}\|_{L^{p),\delta}} \|\tilde \omega_m\|_{L^{p/k),\delta}}.
    \end{multline}

    The last inequality is valid for 
    $\varepsilon' = \frac{\varepsilon (2p + pk - \varepsilon k - \varepsilon)}{pk}$,
    which satisfies \\
    $\frac{(q - \varepsilon)(p - \varepsilon)}{p - q} = \frac{p}{k} - \varepsilon'$.
    It is easy to check that
    $\varepsilon < \varepsilon'$,
    and from Definition~\ref{def:GLd} we can deduce that
    $\delta$ is a nondecreasing function, 
    and thus 
    $\delta (\varepsilon)^{\frac{1}{p/k - \varepsilon'}} \leq \delta (\varepsilon')^{\frac{1}{p/k - \varepsilon'}}$. 

    In order to make sure that 
    $0 < \varepsilon' < \frac{p}{k}-1$,
    we show that
    \begin{equation*}
        h(\varepsilon) = pk \left(\frac{p}{k} -1 - \varepsilon'\right) = (k + 1) \varepsilon^2 - (2p + pk)\varepsilon + p^2 - pk >0.
    \end{equation*}
    First, note that
    $h(0) > 0$
    and 
    $h\left(\frac{p}{k+1} - 1\right) >0$.
    Moreover, 
    $h'(\varepsilon) = 2 (k + 1) \varepsilon - (2p + pk) < 0$
    if
    $\varepsilon < \frac{2p + pk}{2 (k + 1)}$
    with 
    $\frac{p}{k+1} - 1 < \frac{2p + pk}{2 (k + 1)}$,
    i.e., 
    $h(\varepsilon)$ decreases for 
    $0< \varepsilon < \frac{p}{k+1} - 1$
    and takes positive values at the boundary points.
    Thus, 
    $h(\varepsilon)>0$
    for all 
    $\varepsilon \in (0, \frac{p}{k+1} - 1)$,
    and so it follows that
    $0 < \varepsilon' < \frac{p}{k}-1$.

    In view of this, we can consider the supremum over all 
    $0 < \varepsilon' < \frac{p}{k} - 1$,
    and its value 
    is not less than the supremum over all 
    $0 < \varepsilon < q - 1=\frac{p}{k+1} - 1$.
    This completes the proof of \eqref{est:psi}.

    The same arguments show that 
    $d \psi_m = \omega_m = \tilde \omega_m \wedge d f_{m}^{k+1}$
    is bounded in 
    $L^{\frac{p}{k+1}),\delta}$.
    By Lemma~\ref{lem:conv_dif_GL}, this implies that 
    $\omega_m \to \omega_0$
    in 
    $D'$.

\begin{proof}[Proof of Theorem~\ref{thm:main-2} and Theorem~\ref{thm:main-3}]
    Here, we need some modifications of the proof of Theorem~\ref{th:main}. At Step I we use Lemma~\ref{lem:weak_J} to obtain the relation~\eqref{eq:weak_J}. Note that Lemma~\ref{lem:weak_J} can be modified for $k$-forms
    by considering 
    $f^I = (f^1,f^2, \dots, f^k, x^{i_{k+1}}, \dots, x^{i_n})$,
    where 
    $x^{i_l}$
    is a corresponding coordinate function.
    \smallskip\\
    \textsc{Step I.}
    Recall that 
    $p=k+1$. 
    Let us consider the forms
    $u$, $v$, $w$ and their pullbacks
    $\tilde \omega_m$, $\psi_m$, and $\omega_m$
    defined by \eqref{def:uvw} and \eqref{def:pullbacked_forms},
    correspondingly.
    Now we use Lemma~\ref{lem:weak_J} to obtain
    $\omega_m = d \psi_m$
    for each 
    $m$.
    
    Indeed, 
    $\omega_m = df_{m}^{1} \wedge d f_{m}^{2} \wedge \dots \wedge df_{m}^{k+1} \in L^1_{\rm loc}$ 
    by the hypothesis of Theorem~\ref{thm:main-2},
    the local integrability of
    $\psi_m = (-1)^k f_{m}^{k+1} df_{m}^{1} \wedge d f_{m}^{2} \wedge \dots \wedge df_{m}^{k}$
    follows from
    $f\in W_{\rm loc}^{1,\frac{n(k+1)}{n+1}}$,
    as $\frac{n(k+1)}{n+1} < k+1$.
    Then we have
    $df_{m}^{1} \wedge d f_{m}^{2} \wedge \dots \wedge df_{m}^{k} \in L_{\rm loc}^{\frac{n(k+1)}{k(n+1)}}$
    and, from the Sobolev embedding theorem
    $f_{m}^{k+1} \in L_{\rm loc}^{\frac{n(k+1)}{n-k}}$.
    The H\"older inequality provides the required integrability,
    as
    $\frac{k(n+1)}{n(k+1)} + \frac{n-k}{n(k+1)} = 1$.

    Hence,
    for any $(n-k-1)$-form 
    $\eta \in C_0^\infty (\Omega)$,
    \begin{equation*}
        \int_\Omega \omega_m \wedge \eta = (-1)^{k-1} \int_\Omega \psi_m \wedge d \eta.
    \end{equation*}
    By the induction hypothesis
    $\tilde \omega_m \to \tilde \omega_0$
    in 
    $D'$
    and the sequence
    $\tilde \omega_m$
    is
    locally bounded in 
    $L^{p/k)}$.
    \smallskip\\
    \textsc{Step II.} 
    All the estimates of Step II in the proof of Theorem~\ref{th:main} are satisfied if we consider in the definition 
    of the grand Lebesgue norm 
    $\varepsilon_0 = \frac{k+2 - \sqrt{k^2+4k}}{2} < 1$.
    According to Lemmas~\ref{lem:weak_J} and \ref{lem:weak_J>0}, we can replace the local integrability condition of
    $\omega_m$
    by non-negativity of all $k$-minors of the matrix 
    $Df_m$.

    Let 
    $\xi$
    be an arbitrary $C^\infty$-smooth, compactly supported 
    $(n-k)$-form.
    Let us show that
    \begin{equation}\label{eq:conv_0_1}
        \int_\Omega f_{m}^{k+1} \tilde \omega_m \wedge \xi \to 
        \int_\Omega f_{0}^{k+1} \tilde \omega_0 \wedge \xi.
    \end{equation}
    To this end, fix $0 < \varepsilon = \frac{k}{n+1} < k = p-1$. 
    From the Sobolev embedding theorem
    $f_{m}^{k+1} \to f_{0}^{k+1}$
    in
    $L^s$,
    $s < \frac{n(k+1-\varepsilon)}{n-k-1 + \varepsilon}$.
    Put 
    $s' = \frac {k+1- \varepsilon}{k}$
    and
    $s = \frac{k+1 - \varepsilon }{1 - \varepsilon}$,
    then 
    $\frac{1}{s'} + \frac{1}{s} = 1$.
    Hence
    \begin{equation}\label{eq:conv_1_1}
        \left| \int_\Omega f_{m}^{k+1} \tilde \omega_m \wedge \xi - 
        \int_\Omega f_{0}^{k+1} \tilde \omega_m \wedge \xi \right|  \\
        \leq C \|\tilde \omega_m\|_{L^{s'} (A)} \|f_{m}^{k+1} - f_{0}^{k+1}\|_{L^{s} (A)} 
        \to 0,
    \end{equation}
    where 
    $A = \operatorname{supp} \xi$.
    Furthermore,
    for any
    $\gamma > 0$ 
    let 
    $f \in C_0^\infty (\Omega)$ 
    be such that 
    $\|f - f_{0}^{k+1}\|_{L^s(\Omega)} < \gamma$,
    then
    \begin{multline*} 
        \left| \int_\Omega f_{0}^{k+1} \tilde \omega_m \wedge \xi - 
        \int_\Omega f_{0}^{k+1} \tilde \omega_0 \wedge \xi \right| \leq 
        \left| \int_\Omega (f_{0}^{k+1} - f) \tilde \omega_m \wedge \xi \right| \\
         +
        \left| \int_\Omega f (\tilde \omega_m \wedge \xi - \tilde \omega_0 \wedge \xi) \right|
        + \left| \int_\Omega (f - f_{0}^{k+1}) \tilde \omega_0 \wedge \xi \right| \to 0
    \end{multline*}
    as 
    $m \to \infty$.
    The first and the third terms are less than 
    $C\gamma$
    due to the choice of 
    $f$,
    and the second one tends to zero by the induction hypothesis.
    Since 
    $\gamma$
    is arbitrary,
    this implies that 
    \begin{equation}\label{eq:conv_2_1}
        \int_\Omega f_{0}^{k+1} \tilde \omega_m \wedge \xi \to 
        \int_\Omega f_{0}^{k+1} \tilde \omega_0 \wedge \xi.
    \end{equation}
    The relation indicated in \eqref{eq:conv_0_1} follows from \eqref{eq:conv_1_1} and \eqref{eq:conv_2_1}.
    This means that the sequence of forms 
    $\psi_m = f_{m}^{k+1} \tilde \omega_m$
    converges to the form 
    $\psi_0 = f_{0}^{k+1} \tilde \omega_m$
    in 
    $D'$.
    
    It remains to check that 
    the sequences of forms 
    $\psi_m$ 
    and
    $d \psi_m$
    are bounded in 
    $L^{1)}$.
    The H\"older inequality provides
    \begin{multline*}
        \bigg(\int_\Omega |\psi_m|^{1 - \varepsilon} \, dx \bigg)^{\frac{1}{1-\varepsilon}} = 
        \bigg(\int_\Omega |f_{m}^{k+1} \tilde \omega_m|^{1 - \varepsilon} \, dx \bigg)^{\frac{1}{1-\varepsilon}}  \\
        \leq
        \bigg(\int_\Omega |f_{m}^{k+1}|^{(1 - \varepsilon) \frac{k+1 - \varepsilon}{1- \varepsilon}} \, dx \bigg)^{\frac{1}{k+1-\varepsilon}} 
        \bigg(\int_\Omega |\tilde \omega_m|^{(1 - \varepsilon) \frac{k+1 - \varepsilon}{k}} \, dx \bigg)^{\frac{k}{(k+1-\varepsilon)(1 - \varepsilon)}};
    \end{multline*}
    here 
    $\frac{k+1 - \varepsilon}{1-\varepsilon} > 1$.
    
    Multiplying by 
    $\varepsilon$
    and taking the supremum, we obtain
    \begin{multline} \label{est:psi_1}
        \|\psi_m\|_{L^{1)}}  \\
        \leq
        \sup\limits_{0 < \varepsilon < \varepsilon_0}\bigg(\varepsilon\int_\Omega |f_{m}^{k+1}|^{(1 - \varepsilon) \frac{k+1 - \varepsilon}{1 - \varepsilon}} \, dx \bigg)^{\frac{1}{k+1-\varepsilon}} 
        \bigg(\varepsilon\int_\Omega |\tilde \omega_m|^{(1 - \varepsilon) \frac{k+1 - \varepsilon}{k}} \, dx \bigg)^{\frac{k}{(k+1-\varepsilon)(1 - \varepsilon)}}  \\
        \leq
        \sup\limits_{0 < \varepsilon < k}\bigg(\varepsilon\int_\Omega |f_{m}^{k+1}|^{k+1 - \varepsilon} \, dx \bigg)^{\frac{1}{k+1-\varepsilon}} 
        \sup\limits_{0 < \varepsilon' < \frac{k+1}{k}-1}\bigg(\varepsilon'\int_\Omega |\tilde \omega_m|^{\frac{k+1}{k} - \varepsilon'} \, dx \bigg)^{\frac{1}{(k+1)/k-\varepsilon'}} 
        \\
        \leq
        \|f_{m}^{k+1}\|_{L^{k+1)}} \|\tilde \omega_m\|_{L^{\frac{k+1}{k})}}.
    \end{multline}

    The last inequality is valid for 
    $\varepsilon' = \frac{\varepsilon (2 + k - \varepsilon )}{k}$,
    which satisfies
    $\frac{(1 - \varepsilon)(k+1 - \varepsilon)}{k} = \frac{k+1}{k} - \varepsilon'$.
    It is easy to check that
    $\varepsilon < \varepsilon'$.
    In order to make sure that 
    $0 < \varepsilon' < \frac{k+1}{k}-1 = \frac{1}{k}$,
    note that the roots of
    $h(\varepsilon) = \varepsilon^2 - (2 + k)\varepsilon + 1 $,
    $\varepsilon_{1,2} = \frac{k+2 \pm \sqrt{k^2+4k}}{2}$
    are not less than 
    $\varepsilon_0 = \frac{k+2 - \sqrt{k^2+4k}}{2}$,
    and 
    $h(0) = 1 > 0$.

    In view of this, we can consider the supremum over all 
    $0 < \varepsilon' < \frac{1}{k}$,
    and, by doing so, its value is seen to increase.
    This completes the proof of the estimate \eqref{est:psi_1}.

    The same arguments show that 
    $d \psi_m = \omega_m = \tilde \omega_m \wedge d f_{m}^{k+1}$
    is bounded in 
    $L^{\frac{p}{k+1})}$.
    By Lemma~\ref{lem:conv_dif_GL}, this implies that 
    $\omega_m \to \omega_0$
    in 
    $D'$.
\end{proof}

\subsection*{Acknowledgment} 
The author warmly thanks professor Sergey Vodopyanov and my great friend Dr.\ Ian McGregor for the numerous discussions on, and useful comments about this paper.

\end{document}